\newtheorem{tm}{Theorem}[section]
\newtheorem{rk}{Remark}[section]
\newtheorem{ap}{Assumption}[section]
\newtheorem{df}{Definition}[section]
\newtheorem{prop}{Proposition}[section]
\newtheorem{lm}{Lemma}[section]
\newcommand{\E}{\mathbb E}
\newcommand{\PP}{\mathbb P}
\newcommand{\N}{\mathbb N}
\newcommand{\R}{\mathbb R}
\newcommand{\DD}{\mathcal D}
\newcommand{\RR}{\mathcal R}
\newcommand{\FFF}{\mathscr F}
\begin{document}

\title{Finite element approximations for second order stochastic differential
equation driven by fractional Brownian motion}
\shorttitle{Finite element approximations for second order SDE with fBm}

\author{
{\sc Yanzhao Cao}\thanks{yzc0009@auburn.edu}\\[2pt]
Department of Mathematics and Statistics, Auburn University, Auburn, AL 36849
\\[2pt]
{\sc Jialin Hong\thanks{hjl@lsec.cc.ac.cn}
and
Zhihui Liu}\thanks{Corresponding author. Email: liuzhihui@lsec.cc.ac.cn}\\[2pt]
Academy of Mathematics and Systems Science, Chinese Academy of Sciences, Beijing, China
}

\shortauthorlist{Yanzhao Cao, Jialin Hong and Zhihui Liu}

\maketitle

\begin{abstract}
{We consider finite element approximations for a one dimensional second order stochastic differential equation of boundary value type driven by a fractional Brownian motion with Hurst index $H\le 1/2$. 
We make use of a sequence of approximate solutions with the fractional noise replaced by its piecewise constant approximations to construct the finite element approximations for the equation. 
The error estimate of the approximations is derived through rigorous convergence analysis.}
{stochastic differential equation of boundary value type, 
fractional Brownian motion, 
piecewise constant approximation,
finite element approximation}
\end{abstract}

\section{Introduction}

Many physical and engineering phenomena can be modeled by stochastic differential equations (SDEs) and stochastic partial differential equations (SPDEs) when including some levels of uncertainties. 
The advantage of modeling with stochastic equations is that they are able to more fully capture the practical behavior of underlying models; it also means that the corresponding numerical analysis will require new tools to simulate the systems, produce the solutions, and analyze the information stored within the solutions.
Stochastic equations derived from fluid flows and other engineering fields are often driven by white noise. 
The white noise is an uncorrelated noise with delta function as its covariance. 
However, random fluctuations in complex systems may not be uncorrelated, i.e., they may not be white noise. Such noises are named as colored noises.

As an important class of colored noises, the fractional Brownian motion (fBm) type noise appears naturally in the modeling of many physical and social phenomena \citep[see, e.g.,][]{MN68, WGBS15}. 
For examples, fBm is suitable in describing the widths of consecutive annual rings of a tree and the temperature at a specific place \citep[see, e.g.,][]{Shi99}; it can also be applied to simulate the turbulence in an incompressible fluid flow and the prices of electricity in a liberated electricity market \citep[see, e.g.,][]{Sim03}.
As a centered Gaussian process, the fBm can be defined as follows. 
Let $\DD=(0,1)$ and denote $\overline \DD$, $\partial \DD$ the closure and the boundary of $\DD$, respectively.
The fBm $W=\{W(x),\ x\in \overline \DD\}$ on $\overline \DD$ is determined by its covariance function
\begin{align}\label{fbm}
\text{Cov}(x,y):
=\E\left[W(x)W(y)\right]
=\frac{x^{2H}+y^{2H}-|x-y|^{2H}}{2},\quad x,y\in \overline \DD. 
\end{align}
Here $H\in (0,1)$ is the so-called Hurst index. 

The fBms with $H<1/2$ and $H>1/2$ are significantly different both physically and mathematically. 
In the first two aforementioned applications, the corresponding fBms have Hurst index $H>1/2$. 
In such cases, the physical process presents an aggregation and persistent behavior. 
On the other hand,  the Hurst index $H$ is less than $1/2$ in the last two cases where the process is anti-persistent and may have long-range negative interactions. 
These two classes of fBms are separated by the standard Brownian motion whose Hurst index is $H=1/2$.
Mathematically, the fBm with $H>1/2$ is a Gaussian process whose covariance function has a bounded variation on $\overline \DD\times \overline \DD$. 
The stochastic integral against the fBm with $H>1/2$ can be viewed as a pathwise  Riemann-Stieltjes integral (or Young integral) and classical methods are applicative. 
On the contrary, the covariance function of the fBm with $H\le 1/2$ does not have bounded variation. 
This posts a particular difficulty when studying SDEs or SPDEs driven by such noises.

The main objective of this paper is to investigate the well-posedness and finite element approximations for the following second order SDE of boundary type driven by an fBm with $H\le 1/2$:
\begin{align}\label{ell}
\begin{split}
-\frac{d^2}{dx^2}u(x)+f(x,u(x))
&=g(x)+\dot{W}(x),\quad x\in \DD, \\
u(x)&=0,\quad x\in \partial \DD.
\end{split}
\end{align}
Here $g: \DD\rightarrow \R$ is square integrable, $f: \DD\times\R \rightarrow \R$ satisfies certain conditions given in Section \ref{sec2} and $W=\{W(x):\ x\in \overline \DD\}$ is an fBm, determined by \eqref{fbm} with $H\le 1/2$, on a filtered probability space $(\Omega, \FFF, (\FFF_x)_{x\in \overline \DD}, \PP)$. 
The homogenous Dirichlet boundary condition in Eq. \eqref{ell} corresponds to a second order SDE conditioned to hit a particular point at ``time" $x=1$.
As such it is a generalization to general fractional noise of the conditioned diffusions studied in \citep[]{HSV11}. 
On the other hand, this equation can be considered as an elliptic SPDE in one-dimension. 
Note that one can also study homogenous Neumann boundary condition and the main results of this paper are also valid.

Eq. \eqref{ell} driven by the white noise, i.e., $H=1/2$, has been considered by several authors \citep[see, e.g.,][]{ANZ98, CYY07, CHL15, DZ02, GM06, MS06, ZTRK15}. 
\citep[]{ANZ98} investigated the finite difference and finite element approximations of the linear case of Eq. \eqref{ell}. 
They proved the first order convergence for both the finite difference and finite element approximations. 
The three authors in \citep[]{CHL15} investigated the finite element approximations of Eq. \eqref{ell} in possibly any dimensions formulated in the form of Karhunen-Lo\`eve expansions for certain Gaussian noises. 
For the case where $H>1/2$, the well-posedness and finite difference approximations can be studied using the methodology of \citep[]{MS06} by treating the fBm as a colored noise with a special Riesz kernel. 
However, the method of treating the white noise and more regular noises does not apply to fractional noise with $H<1/2$, since the exact solution is less regular. 
To the best of our knowledge, there have not been literatures studying numerical approximations for SDEs or SPDEs driven by fractional noises with $H<1/2$. 

The primary challenge in studying the finite element approximations of Eq. \eqref{ell} driven by fBm with $H<1/2$ is three-folds:
(i) as a colored noise, the increments of the fBm in two disjoint intervals are not independent; 
(ii) the regularity of $\dot{W}$ with $H<1/2$ is very low; 
(iii) the approach of Karhunen-Lo\`{e}ve expansions used in \citep[]{CHL15} fails.

In this paper we study the well-posedness and the finite element approximations of Eq. \eqref{ell} through a special It\^{o} isometry which is only valid for $H\le 1/2$ (see \eqref{ito}). 
Using this isometry we obtain the existence of a unique solution for Eq. \eqref{ell} by analyzing the convergence of a sequence of approximate solutions of SPDEs with the fractional noise replaced by a sum of tensor products between correlated Gaussian random variables and piecewise constant functions in the physical domain. 

Following the well-posedness analysis, we construct the finite element approximations of  Eq. \eqref{ell} through two steps.
In the first step, we derive an error estimate between the exact solution and its approximations which are used in the well-posedness analysis. 
This error estimate also heavily depends on the aforementioned It\^{o} isometry.  
In the second step, we apply the Galerkin finite element method to the approximate noise driven SPDE and obtain the overall error estimate of the finite element solution through an finite element error estimate for the approximate SPDE.

The paper is organized as follows. 
First we define the weak solution and mild solution of Eq. \eqref{ell} and establish their existence and uniqueness in Section \ref{sec2}. 
Next in Section \ref{sec3} we derive the error estimate between the exact solution of Eq. \eqref{ell} and the solution of the approximate SPDE. 
In Section \ref{sec4}, we apply a finite element method to this approximate SPDE and derive the overall error estimate of the finite element solution.
Finally a few concluding remarks are given in Section \ref{sec5}. 

We end this section by introducing some notations which will be used throughout the paper. 
Denote by $L^2(\DD)$ the space of square integrable functions in $\DD$ with its inner product and norm denoted by $(\cdot,\cdot)$ and $\|\cdot\|$, respectively.  
For $r>0$, we use $H^r(\DD)$ to denote the usual Sobolev space whose norm is denoted by $\|\cdot\|_r$. 
We also use $H^1_0(\DD)$ to denote the subspace of $H^1(\DD)$ whose elements vanish on $\partial \DD$. 
We denote by $C$ a generic positive constant independent of either the truncation number $n$ or the grid size $h$ which will changes from one line to another.

\section{Well-posedness of the problem}
\label{sec2}

In this section, we define the weak solution and mild solution of Eq. \eqref{ell} and then establish their equivalence, existence and uniqueness.

\begin{df}\label{df-weak}
An $\FFF_x$-adapted stochastic process $u=\{u(x):x\in \DD\}$ is called a weak solution of Eq. \eqref{ell}, if for every $\phi\in C^2(\DD)\cap C(\overline \DD)$ vanishing on $\partial \DD$ it holds a.s. that
\begin{align}\label{weak}
-\int_{\DD} u(x)\phi''(x)dx+\int_{\DD} f(x,u(x))\phi(x)dx=\int_{\DD} g(x)\phi(x)dx+\int_{\DD} \phi(x)dW(x).
\end{align}
\end{df}

\begin{df}\label{df-mild}
An $\FFF_x$-adapted stochastic process $u=\{u(x):x\in \DD\}$ is called a mild solution of  Eq. \eqref{ell}, if for all $x\in \DD$ it holds a.s. that
\begin{align} \label{mild}
u(x)+\int_{\DD} G(x,y)f(y,u(y))dy=\int_{\DD} G(x,y)g(y)dy+\int_{\DD} G(x,y)dW(y),
\end{align}
where $G$ is the Green's function associated with the Possion equation
with Dirichlet boundary.
\end{df}

It is well known that the related Green's function $G$ is given by $G(x,y)=x\wedge y-xy$, $x,y\in \overline \DD$. 
Obviously  $G$ is Lipschitz continuous over $\overline \DD\times \overline \DD$. 
Without loss of generality, we assume that $f(x,0)=0$ for any $x\in \DD$. Otherwise, we simply replace $f(x,r)$ by $f(x,r)-f(x,0)$ and $g(x)$ by $g(x)-f(x,0)$. 
Assume furthermore that $f$ satisfies the following assumptions.

\begin{ap}\label{ap}
\begin{enumerate}
\item
(Monotone type condition) 
There exists a positive constant $L<\gamma$ such that
\begin{align}\label{one-lip}
(f(x,r)-f(x,s),r-s)\ge -L|r-s|^2,\quad \forall\ x\in \DD,\ r,s\in \R,
\end{align}
where $\gamma$ is the positive constant in the Poincar\'{e} inequality \citep[see, e.g.,][Theorem 6.30]{AF03}:
\begin{align}\label{poin}
\Big\|\frac{d}{dx} v \Big\|^2
\ge \gamma\|v\|^2,\quad \forall\ v\in H^1_0(\DD).
\end{align}

\item  (Linear growth condition) There exists a positive constant $\beta$ such that
\begin{align}\label{lin-gro}
|f(x,r)-f(x,s)|\le\beta(1+|r-s|),\quad \forall\ x\in \DD,\ r,s\in \R.
\end{align}
\end{enumerate}
\end{ap}

We remark that these two conditions can be satisfied when $f$ is a sum of a non-decreasing bounded function and a Lipschitz continuous function with the Lipschitz constant less than $\gamma$ \citep[see, e.g.,][]{BP90,GM06}.
In the case $\DD=(0,1)$ it can be easily shown that $\gamma =2$.
Therefore we assume that $L<2$ throughout the rest of this paper.

Before establishing the well-posedness of Eq. \eqref{ell}, we follow the approach of \citep[]{BJ06} to define  stochastic integral with respect to the fBm $W$ with $H<1/2$. 
To this end, we introduce
the set $\Phi$ of all  step functions on $\DD$ of the form
\begin{align*}
f=\sum_{j=0}^{N-1} f_j\chi_{(a_j,a_{j+1}]},
\end{align*}
where $0=a_0<a_1<\cdots<a_N=1$ is a partition of $\DD$ and $f_j\in\R$, $j=0,1,\cdots,N-1$, $N\in \N_+$. 
For $f\in \Phi$, we define its integral with respect to $W$ by Riemann sum as
\begin{align*}
I(f)=\sum_{j=0}^{N-1} f_j(W(a_{j+1})-W(a_j)),
\end{align*}
and for $f,g\in \Phi$, we define their scalar product as
\begin{align*}
\Psi(f,g):=\E\left[I(f)I(g)\right].
\end{align*}

Next we extend $\Phi$ through completion to a Hilbert space,  denoted by $\Phi^H$. 
By \citep[Lemma 2.1]{BJ06},
we have a characterization of $\Phi^H$ through It\^o isometry for simple functions:
\begin{align}\label{ito}
\Psi(f,g)
&=\frac{H(1-2H)}{2}\int_{\DD}\int_{\DD} 
\frac{(f(x)-f(y))(g(x)-g(y))}{|x-y|^{2-2H}}dxdy    \nonumber  \\
&\quad +H\int_{\DD} f(x)g(x) (x^{2H-1}+(1-x)^{2H-1}) dx,
\quad \forall\ f,g\in \Phi.
\end{align}
This shows that
\begin{align*}
\Phi^H=\Bigg\{f\in L^2(\overline \DD):\int_{\R}\int_{\R} 
\frac{|\overline{f}(x)-\overline{f}(y)|^2}{|x-y|^{2-2H}}dxdy<\infty\Bigg\},
\end{align*}
where $\overline{f}(x)=f(x)$ when $x\in\overline \DD$ and $\overline{f}(x)=0$ otherwise.
As a consequence, the integral $I$ for a measurable deterministic function $f:\overline \DD\rightarrow \R$ with respect to the fBm $W$ is an isometry between $\Phi^H$ and a subspace of $L^2(\PP)$.

\begin{lm}
\begin{enumerate}
\item
The stochastic process $\{v(x):=\int_{\DD} G(x,y)dW(y),\ x\in\overline \DD\}$ possesses an a.s. continuous modification.

\item
Definitions \ref{df-weak} and \ref{df-mild} are equivalent to each other.
\end{enumerate}
\end{lm}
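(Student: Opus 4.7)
\medskip
\noindent\textbf{Proof proposal.}

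For part (1), the plan is to invoke Kolmogorov's continuity criterion. For any $x,y\in\overline\DD$ the random variable $v(x)-v(y)=\int_{\DD}(G(x,z)-G(y,z))\,dW(z)$ is a centered Gaussian (the integrand is deterministic and lies in $\Phi^H$ because $G$ is Lipschitz and bounded). Hence every even moment reduces to a power of the variance, so it suffices to estimate $\E|v(x)-v(y)|^2$ and then raise to a sufficiently high power. By the It\^o isometry \eqref{ito} applied with $f=g=G(x,\cdot)-G(y,\cdot)$, this variance splits into a double integral and a boundary term. For the boundary term I would use $|G(x,z)-G(y,z)|\le C|x-y|$ uniformly in $z$, together with the integrability of $z^{2H-1}+(1-z)^{2H-1}$ over $\DD$ (since $2H-1>-1$), which yields a bound of order $|x-y|^2$. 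For the double integral, the Lipschitz estimate alone is too crude; the key observation is that the difference $(G(x,z)-G(y,z))-(G(x,w)-G(y,w))$ can be bounded both by $C|z-w|$ (Lipschitz in the second variable) and by $C|x-y|$, so its square is bounded by $C|x-y|^{2-2\alpha}|z-w|^{2\alpha}$ for any $\alpha\in(0,1)$. Choosing $\alpha$ so that $2-2H+2\alpha\cdot\text{(something)}$ integrates against the kernel $|z-w|^{-(2-2H)}$ produces a bound of the form $\E|v(x)-v(y)|^2\le C|x-y|^{2H}$. Raising to a large even power then gives the Kolmogorov hypothesis with exponent strictly greater than one.

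For part (2), the plan is to exploit the defining property of the Green's function, namely the identity
\begin{align*}
-\int_{\DD}G(x,y)\phi''(x)\,dx=\phi(y),\qquad y\in\DD,
\end{align*}
valid for every $\phi\in C^2(\DD)\cap C(\overline\DD)$ vanishing on $\partial\DD$ (standard integration by parts, using $-\partial_x^2 G(\cdot,y)=\delta_y$). For the implication mild $\Rightarrow$ weak, I would multiply \eqref{mild} by $-\phi''(x)$, integrate over $\DD$, and apply Fubini to the deterministic double integrals; the stochastic Fubini for the term $\int_\DD\int_\DD G(x,y)\phi''(x)\,dx\,dW(y)$ is justified since the deterministic integrand $G(\cdot,\cdot)\phi''(\cdot)$ is bounded and measurable, and the inner $x$-integral produces $-\phi(y)\in\Phi^H$. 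This transforms \eqref{mild} into \eqref{weak}.

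For the converse weak $\Rightarrow$ mild direction, which I expect to be the main obstacle, the natural choice $\phi(\cdot)=G(x,\cdot)$ is not admissible since $G(x,\cdot)$ is only piecewise linear and thus not in $C^2(\DD)$. My plan is to mollify: fix $x\in\DD$ and construct a sequence $\phi_n\in C^2(\DD)\cap C(\overline\DD)$ with $\phi_n|_{\partial\DD}=0$ such that $\phi_n\to G(x,\cdot)$ uniformly and $-\phi_n''\to\delta_x$ in a suitable distributional sense, for example by convolving $G(x,\cdot)$ with a smooth mollifier and truncating near the boundary. Plugging each $\phi_n$ into \eqref{weak} and passing to the limit, the left-hand side yields $u(x)+\int_\DD G(x,y)f(y,u(y))\,dy$ (using the linear growth of $f$ and continuity of $u$ from part (1) together with the Lipschitz continuity of $G$), while the right-hand side yields $\int_\DD G(x,y)g(y)\,dy+\int_\DD G(x,y)\,dW(y)$; convergence of the stochastic integral follows from the It\^o isometry \eqref{ito} applied to $\phi_n-G(x,\cdot)$, whose $\Phi^H$-norm tends to zero by the mollifier construction. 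The main delicate point is the uniform-in-$n$ control of the singular kernel in \eqref{ito} for the mollified test functions, which I would handle by choosing the mollification scale to balance the $H$-dependent singularity.
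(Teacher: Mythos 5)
Your treatment of part (1) is essentially the paper's: both compute $\E\left[|v(x)-v(y)|^2\right]$ via the isometry \eqref{ito} and exploit that the increment of $G$ is simultaneously $O(|x-y|)$ and $O(|z-w|)$. The paper simply takes the geometric mean of the two Lipschitz bounds (your interpolation with $\alpha=1/2$), integrates $|z-w|^{2H-1}$ over $\DD\times\DD$, obtains $\E\left[|v(x)-v(y)|^2\right]\le C|x-y|$, and then invokes Gaussianity exactly as you propose. The mild $\Rightarrow$ weak direction also matches the paper.

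For weak $\Rightarrow$ mild you take a genuinely different and more laborious route. The paper does not mollify $G(x,\cdot)$; instead it tests \eqref{weak} with $\phi(\cdot)=-\int_{\DD}G(\cdot,y)\psi(y)\,dy$ for an arbitrary $\psi\in C^{\infty}(\DD)$, which is an admissible test function vanishing on $\partial\DD$ with $-\phi''=\psi$, and reads off that $\int_{\DD}u\,\psi\,dx$ equals the mild right-hand side paired with $\psi$ for every $\psi$, whence \eqref{mild}. This duality trick buys two things over your mollification scheme: it avoids all the $H$-dependent estimates needed to show $\phi_n\to G(x,\cdot)$ in $\Phi^H$, and, more importantly, it avoids the step $-\int_{\DD}u\,\phi_n''\,dx\to u(x)$, which requires continuity of $u$ at the point $x$. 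Definition \ref{df-weak} does not grant that regularity a priori, and part (1) only gives continuity of the stochastic convolution, not of a general weak solution $u$ before you have identified it with the mild form; as written, the limit passage on the first term of \eqref{weak} is therefore circular, or at least needs an intermediate ``for a.e.\ $x$'' version of \eqref{mild} first --- which is precisely what the paper's choice of test function delivers in one line. Your plan is salvageable (establish the identity for a.e.\ $x$, then upgrade), but the paper's duality argument is the cleaner and evidently the intended one.
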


\begin{proof}Let $x_1,x_2\in \DD$. The Ito's isometry \eqref{ito} yields
\begin{align*}
\E\left[|v(x_1)-v(x_2)|^2 \right]  
&=\frac{H(1-2H)}{2}\int_{\DD}\int_{\DD} \frac{|[G(x_1,y)-G(x_2,y)]-[G(x_1,z)-G(x_2,z)]|^2}{|y-z|^{2-2H}} dydz   \\
&\quad+H\int_{\DD} |G(x_1,y)-G(x_2,y)|^2(y^{2H-1}+(1-y)^{2H-1})dx.
\end{align*}
Since $G=\{G(x,y):\ x,y\in \overline \DD\}$ is Lipschitz continuous with respect to both $x$ and $y$, we have
\begin{align*}
|[G(x_1,y)-G(x_2,y)]-[G(x_1,z)-G(x_2,z)]|^2
\le 2|x_1-x_2|\times 2|y-z|.
\end{align*}
Direct calculations yield that
\begin{align*}
\int_{\DD}\int_{\DD} |y-z|^{2H-1}dydz=H(1+2H).
\end{align*}
Therefore, there exists $C=C(H)$ such that 
\begin{align*}
\E\left[|v(x_1)-v(x_2)|^2\right]\le C|x_1-x_2|,\quad x_1,x_2\in \overline \DD,
\end{align*}
from which and the fact that $v$ is Gaussian we conclude that $v$ has an a.s. continuous modification \citep[see, e.g.,][Exercise 4.9]{Kho09}.

Assume  that $u$ satisfies \eqref{mild} and let $\phi\in C_0^\infty(\DD)$. Multiplying  \eqref{mild} by $\phi''(x)$,  integrating over $\DD$, and using the identity $-\int_{\DD} G(x,y) \phi''(y)dy=\phi(x)$,
we obtain \eqref{weak} for smooth $\phi$. The general case follows from the fact that $C_0^\infty(\DD)$ is dense in $C^2(\DD)\cap C(\overline \DD)$.

Suppose now that $u$ satisfies \eqref{weak}. 
Choose $\phi(x)=-\int_{\DD} G(x,y)\psi(y)dy$ with $\psi\in C^\infty(\DD)$. 
Then $\phi\in C^2(\DD)\cap C(\overline \DD)$ vanishing on the boundary $\partial \DD$ and $-\phi''(x)=\psi(x)$.
We conclude
\small{
\begin{align*}
\int_{\DD} u(x)\psi(x)dx+\int_{\DD}\int_{\DD} G(x,y)f(y,u(y))\psi(x)dxdy
=\int_{\DD}\int_{\DD} G(x,y)g(y)\psi(x)dxdy+\int_{\DD}\int_{\DD} G(x,y)\psi(x)dW(y)dx,
\end{align*}}
from which \eqref{mild} follows. The proof is complete.
\end{proof} \\

Next we define a sequence of approximations to the fractional noise $\dot{W}$.
Let $\{\DD_i=(x_i,x_{i+1}],\ x_i=i h,\ i=0,1,\cdots,n-1\}$, where $h=1/n$. We define the piecewise constant approximations of $\dot{W}$ by
\begin{align}\label{w-z}
\dot{W}^n(x)=\sum_{i=0}^{n-1}\frac{\chi_i(x)}{h}\int_{\DD_i}dW(y), 
\quad n\in \N,\ x\in \overline \DD.
\end{align}
where $\chi_i$ is the characteristic function of $\DD_i$. It is apparent that for each $n\in \N$, $\dot{W}^n\in L^2(\DD)$ a.s. 
However, we have the following identity which shows that 
$\E\left[\|\dot{W}^n\|^2\right]$ is unbounded as $h\rightarrow0$:
\begin{align}\label{wh-est}
\E\left[\|\dot{W}^n\|^2\right]=h^{2H-2},\quad \forall\ n\in \N.
\end{align}

The following estimate  will play an important role both in the proof of the existence of the weak solution of Eq. \eqref{ell} and in the error estimate of piecewise constant approximations.

\begin{lm}\label{dif-gri}
There exists $C=C(H)$ such that
\begin{align}\label{dif-gri0}
\sum_{i\neq j}^n\int_{\DD_i}\int_{\DD_j}|x-y|^{2H-2}dxdy\le Ch^{2H-1}.
\end{align}
\end{lm}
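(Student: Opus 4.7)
The plan is to exploit translation invariance of the grid, reduce the double sum to a single sum indexed by the distance $k = |i-j|$ between intervals, and then split off the awkward nearest-neighbour term $k=1$ (where the intervals are adjacent and $|x-y|$ can be arbitrarily small) from the bulk terms $k\ge 2$ (where $|x-y|$ is bounded below by $(k-1)h$).

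Concretely, first I would observe that by translation invariance the integral
\[
I_k:=\int_{\DD_i}\int_{\DD_j} |x-y|^{2H-2}\,dx\,dy
\]
depends only on $k:=|i-j|\ge 1$; after the change of variables $u=x-ih$, $v=y-jh$ (assume $j>i$) one has
\[
I_k=\int_0^h\!\!\int_0^h (kh+v-u)^{2H-2}\,dv\,du .
\]
Then by symmetry
\[
\sum_{i\ne j}^n \int_{\DD_i}\!\!\int_{\DD_j}|x-y|^{2H-2}\,dx\,dy
\;=\;2\sum_{k=1}^{n-1}(n-k)\,I_k
\;\le\;2n\sum_{k=1}^{n-1} I_k .
\]

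Next I would estimate the $I_k$. For $k\ge 2$ the integrand satisfies $(kh+v-u)^{2H-2}\le ((k-1)h)^{2H-2}$ (using $2H-2<0$ and $kh+v-u\ge (k-1)h$), so
\[
I_k\le h^2\bigl((k-1)h\bigr)^{2H-2}=h^{2H}(k-1)^{2H-2}.
\]
For the nearest-neighbour term $k=1$ I cannot discard the singularity; instead I would compute directly by setting $w=h+v-u$ with $u$ fixed, giving
\[
I_1=\frac{1}{2H-1}\int_0^h\!\bigl[(2h-t)^{2H-1}-(h-t)^{2H-1}\bigr]dt
=\frac{2^{2H}-2}{2H(2H-1)}\,h^{2H},
\]
since both one-dimensional integrands are integrable at their respective endpoints (as $2H>0$). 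Hence $I_1\le C(H)\,h^{2H}$.

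Finally I would assemble the estimates: since $H<1/2$ gives $2H-2<-1$, the tail series $\sum_{m\ge 1} m^{2H-2}$ converges to a finite constant depending only on $H$, so
\[
\sum_{k=1}^{n-1} I_k\;\le\; h^{2H}\Bigl(C(H)+\sum_{m=1}^{\infty} m^{2H-2}\Bigr)\;\le\; C(H)\,h^{2H},
\]
and multiplying by the prefactor $2n=2h^{-1}$ yields the claimed bound $Ch^{2H-1}$. The main obstacle is the $k=1$ contribution, where the intervals are adjacent and the kernel $|x-y|^{2H-2}$ is non-integrable on the diagonal; the explicit computation above is what resolves it, showing that the singularity is integrated twice and produces only the benign factor $h^{2H}$. (The borderline case $H=1/2$ need not be treated, since the coefficient $H(1-2H)/2$ in front of this sum in the isometry~\eqref{ito} vanishes there.)
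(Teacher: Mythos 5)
Your proof is correct, but it takes a genuinely different route from the paper's. The paper evaluates each integral $\int_{\DD_i}\int_{\DD_j}|x-y|^{2H-2}\,dxdy$ exactly in closed form as $A_{i,j}(H)h^{2H}/(2H(1-2H))$ with $A_{i,j}(H)=2|i-j|^{2H}-|i-j+1|^{2H}-|i-j-1|^{2H}$, and then sums: the double sum of the $A_{i,j}$ telescopes (Abel summation in $k=|i-j|$) to a quantity of order $n-n^{2H}\le n$, which yields the bound with the explicit constant $1/(H(1-2H))$. You instead compute only the nearest-neighbour term $I_1$ exactly and control the terms with $k=|i-j|\ge 2$ by the crude pointwise bound $|x-y|\ge (k-1)h$ combined with the convergence of $\sum_m m^{2H-2}$ for $H<1/2$; your $I_1$ evaluation and the reduction $\sum_{i\ne j}=2\sum_{k=1}^{n-1}(n-k)I_k\le 2n\sum_k I_k$ are both accurate. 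The trade-off is that the paper's exact evaluation treats $k=1$ and $k\ge 2$ uniformly and exposes the cancellation hidden in the second differences $A_{i,j}$, while your argument avoids the telescoping identity altogether and makes the source of the $h^{2H-1}$ rate (one factor $h^{-1}$ from the number of bands, $h^{2H}$ per band) more transparent; both constants degenerate as $H\to 1/2$. Your closing remark that $H=1/2$ need not be covered because of the prefactor $H(1-2H)/2$ in the isometry \eqref{ito} is apt — the paper's own formula likewise degenerates there.
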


\begin{proof}By direct calculation, for $i,j\in \{0,1,\cdots,n-1\}$ and $i\neq j$,
\begin{align*}
\int_{\DD_i}\int_{\DD_j}|x-y|^{2H-2}dxdy=\frac{A_{i,j}(H)h^{2H}}{2H(1-2H)},
\end{align*}
where $A_{i,j}(H)=2|i-j|^{2H}-|i-j+1|^{2H}-|i-j-1|^{2H}$.
A simple calculation implies that
$\sum\limits_{i\neq j}A_{i,j}(H)=(n-n^{2H})/2$.
As a consequence,
\begin{align*}
\sum_{i\neq j}\int_{\DD_i}\int_{\DD_j}|x-y|^{2H-2}dxdy
=\frac{h^{2H}}{2H(1-2H)}\sum_{i\neq j}A_{i,j}(H)
=\frac{h^{2H}(n-n^{2H})}{H(1-2H)}\le \frac{h^{2H-1}}{H(1-2H)},
\end{align*}
which proves \eqref{dif-gri0} with $C=\frac{1}{H(1-2H)}$.
\end{proof} \\

Define the error between the two stochastic convolutions by $E^n$:
\begin{align}\label{en}
E^n(x):=\int_{\DD} G(x,y) dW(y)-\int_{\DD} G(x,y) dW^n(y),
\quad x\in \DD.
\end{align}
From  \eqref{w-z} we have 
\begin{align*}
\int_{\DD} G(x,y)dW^n(y)
=\int_{\DD}\left(\sum_{i=0}^{n-1} \frac{\chi_i(y)}{h}\int_{\DD_i}G(x,z)dz\right)dW(y).
\end{align*}
Then we can rewrite $E^n$ as
\begin{align*}
E^n(x)=\frac{1}{h}\sum_{i=0}^{n-1}\int_{\DD_i}\int_{\DD_i}(G(x,y)-G(x,z))dzdW(y).
\end{align*}
Next we use Lemma \ref{dif-gri} to derive an estimate for $E^n$.

\begin{prop}\label{en-est}
There exists a constant $C=C(H)$ such that
\begin{align}\label{en-est0}
\sup_{x\in \DD}\E\left[|E^n(x)|^2\right]
\le Ch^{2H+1}.
\end{align}
\end{prop}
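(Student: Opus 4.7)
The plan is to write $E^n(x)$ as a stochastic integral against $W$ of a deterministic integrand $F_x$, then apply the Itô isometry \eqref{ito} and estimate the resulting two terms by splitting the double integral over $\DD\times\DD$ into diagonal ($y,z$ in the same subinterval $\DD_i$) and off-diagonal ($y\in\DD_i$, $z\in\DD_j$, $i\neq j$) pieces. The key inputs are Lipschitz continuity of $G(x,\cdot)$ and Lemma \ref{dif-gri} for the off-diagonal piece.

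Concretely, I would fix $x\in\DD$ and define
\begin{align*}
F_x(y):=G(x,y)-\sum_{i=0}^{n-1}\frac{\chi_i(y)}{h}\int_{\DD_i}G(x,w)\,dw,
\end{align*}
so that $E^n(x)=I(F_x)=\int_{\DD}F_x(y)\,dW(y)$. For $y\in\DD_i$ one has $F_x(y)=\frac{1}{h}\int_{\DD_i}(G(x,y)-G(x,w))\,dw$, and the Lipschitz continuity of $G$ in its second argument (with constant at most $1$) gives the pointwise bound $|F_x(y)|\le h$. Applied to \eqref{ito} with $f=g=F_x$, this immediately handles the boundary-type term: $H\int_\DD F_x(y)^2(y^{2H-1}+(1-y)^{2H-1})\,dy\le Ch^2\le Ch^{2H+1}$ since $H\le 1/2$ and $h\le 1$.

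For the main double integral in \eqref{ito}, I would use the crucial observation that whenever $y,z$ lie in the same $\DD_i$, the subtracted local averages cancel and
\begin{align*}
F_x(y)-F_x(z)=G(x,y)-G(x,z),\qquad\text{hence}\qquad |F_x(y)-F_x(z)|\le|y-z|.
\end{align*}
The diagonal contribution is therefore
\begin{align*}
\sum_i\int_{\DD_i}\int_{\DD_i}\frac{|y-z|^2}{|y-z|^{2-2H}}\,dy\,dz
=\sum_i\int_{\DD_i}\int_{\DD_i}|y-z|^{2H}\,dy\,dz
\le C\,n\cdot h^{2H+2}=Ch^{2H+1}.
\end{align*}
For the off-diagonal contribution I use the cruder bound $|F_x(y)-F_x(z)|\le 2h$ together with Lemma \ref{dif-gri} to get
\begin{align*}
\sum_{i\neq j}\int_{\DD_i}\int_{\DD_j}\frac{(F_x(y)-F_x(z))^2}{|y-z|^{2-2H}}\,dy\,dz
\le 4h^2\sum_{i\neq j}\int_{\DD_i}\int_{\DD_j}|y-z|^{2H-2}\,dy\,dz
\le Ch^{2H+1}.
\end{align*}
Summing these two contributions, multiplying by $\frac{H(1-2H)}{2}$, and adding the boundary-type term yields the uniform estimate \eqref{en-est0}.

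The step I expect to be subtlest is the off-diagonal piece: the singular kernel $|y-z|^{2H-2}$ alone is not locally integrable for $H<1/2$, so the naive pointwise bound $|F_x(y)-F_x(z)|\le 2h$ is not enough on its own, and one really needs Lemma \ref{dif-gri} to convert the sum of off-diagonal kernel integrals into the sharp $h^{2H-1}$-type scaling. The cancellation in the diagonal case is the other decisive observation — without it one would only get $h^{2H}$ rather than the desired $h^{2H+1}$.
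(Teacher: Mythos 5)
Your proof is correct and follows essentially the same route as the paper: apply the It\^o isometry \eqref{ito}, split the double integral into diagonal and off-diagonal blocks, exploit the cancellation of the local averages on the diagonal, and invoke Lemma \ref{dif-gri} for the off-diagonal part. The only cosmetic difference is that you bound the off-diagonal increment pointwise by $2h$ where the paper first applies H\"older's inequality to the averaged integrand, but both land on the same $4h^2$ factor in front of the kernel sum.
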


\begin{proof}
Appyling It\^{o} isometry formula \eqref{ito}, we obtain
\begin{align}
\E\left[|E^n(x)|^2\right]
&=\frac{H(1-2H)}{2}\int_{\DD}\int_{\DD} \frac{|[G(x,y)-\widehat{G}(x,y)]-[G(x,z)-\widehat{G}(x,z)]|^2}{|y-z|^{2-2H}}dydz     \nonumber  \\
&\quad +H\int_{\DD} |G(x,y)-\widehat{G}(x,y)|^2(y^{2H-1}+(1-y)^{2H-1})dy
=:\frac{H(1-2H)}{2}\cdot I_1+H\cdot I_2. \label{eh2}
\end{align}
For $I_1$, we first split it into two parts as follows:
\begin{align}
I_1&=\frac{1}{h^2}\sum_{i\neq j}^{n-1}\int_{\DD_i}\int_{\DD_i}
\frac{\left|\int_{\DD_i} G(x,u)-G(x,y) du-\int_{\DD_j} G(x,v)-G(x,z) dv\right|^2}{|y-z|^{2-2H}}dydz  \nonumber  \\
&\quad + \sum_{i=0}^{n-1}\int_{\DD_i}\int_{\DD_i} 
\frac{|G(x,y)-G(x,z)|^2}{|y-z|^{2-2H}}dydz
=: I_{11}+I_{12}.\label{e1}
\end{align}
Applying H\"{o}lder's inequality and the estimate \eqref{dif-gri0} in Lemma \ref{dif-gri}, we get
\begin{align}\label{e11}
I_{11}
&{\color{blue} \le} \frac{1}{h^2}\sum_{i\neq j}^{n-1}\int_{\DD_i}\int_{\DD_j}\int_{\DD_i}\int_{\DD_j}\frac{|[G(x,u)-G(x,y)]-[G(x,v)-G(x,z)]|^2}{|y-z|^{2-2H}}dudvdydz \nonumber   \\
&\le \frac{2}{h^2}\sum_{i\neq j}^{n-1}\int_{\DD_i}\int_{\DD_j}\int_{\DD_i}\int_{\DD_j}\frac{|u-y|^2+|v-z|^2}{|y-z|^{2-2H}}dudvdydz   \nonumber \\
&\le 4h^2\sum_{i\neq j}^{n-1}\int_{\DD_i}\int_{\DD_j}|y-z|^{2H-2}dudvdydz
\le 4h^{2H+1}.
\end{align}
Since the Green's function is Lipschitz continuous,
\begin{align}\label{e12}
I_{12}
{\color{blue} 
\le \sum_{i=0}^{n-1}\int_{\DD_i}\int_{\DD_i} 
\frac{|G(x,y)-G(x,z)|^2}{|y-z|^{2-2H}}dydz}
\le \sum_{i=0}^{n-1}\int_{\DD_i}\int_{\DD_i} |y-z|^{2H} dydz
=\frac{2h^{2H+1}}{(2H+1)(2H+2)}.
\end{align}

Next we evaluate $I_2$. Since the Green's function $G$ is Lipschitz continuous,
\begin{align}\label{e22}
I_2
=\sum_{i=0}^{n-1}\int_{\DD_i}\left|\frac{1}{h}\int_{\DD_i}
G(x,u)-G(x,y) du\right|^2
\left( y^{2H-1}+(1-y)^{2H-1} \right) dy 
\le C h^2.
\end{align}
Combining \eqref{eh2}--\eqref{e22}, we obtain the desired estimate \eqref{en-est0}.
\end{proof} \\

For $\phi\in L^2(\DD)$, define
$K\phi:=\int_{\DD} G(\cdot,y)\phi(y)dy$.
We also denote $K\dot{W}:=\int_{\DD} G(\cdot,y)dW(y)$.
Set $f(u)=f(\cdot,u(\cdot))$. 
Then \eqref{mild} can be rewritten as
\begin{align}\label{mild-1}
u+Kf(u)=Kg+K\dot{W}.
\end{align}
 To prove the existence of a unique solution of Eq. \eqref{mild-1}, we need the following inequality which can be derived from the Poincar\'{e}'s inequality \eqref{poin} \citep[see, e.g.,][Lemma 2.4]{BP90}:
\begin{align}
(K\phi,\phi)\ge \gamma\|K\phi\|^2,\quad\forall\ \phi\in L^2(\DD). \label{poin2}
\end{align}

\begin{tm}
Let Assumption \ref{ap} hold.  
Eq. \eqref{ell} possesses a unique mild solution.
\end{tm}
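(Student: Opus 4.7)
The plan is to obtain existence via an approximating sequence $\{u^n\}\subset H^1_0(\DD)$ defined as the pathwise mild solution of
\begin{align*}
u^n+Kf(u^n)=Kg+K\dot W^n,
\end{align*}
where $\dot W^n$ is the piecewise constant noise \eqref{w-z}. For each $n$ and a.e. $\omega$, the map $K\dot W^n(\omega,\cdot)$ is a bounded continuous deterministic function on $\overline\DD$, so this reduces pathwise to a semilinear Dirichlet problem with monotone, linearly-growing nonlinearity $f$. Classical monotone operator theory (Browder--Minty applied in the variational $H^1_0(\DD)$ framework, with coercivity supplied by \eqref{poin} and \eqref{one-lip}) yields a unique $\FFF_x$-adapted $u^n\in H^1_0(\DD)$.

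To show $\{u^n\}$ is Cauchy in $L^2(\Omega;L^2(\DD))$, I would subtract the equations for $u^n$ and $u^m$:
\begin{align*}
w^{nm}+K\phi^{nm}=\eta^{nm},\qquad w^{nm}:=u^n-u^m,\ \phi^{nm}:=f(u^n)-f(u^m),\ \eta^{nm}:=K\dot W^n-K\dot W^m,
\end{align*}
pair with $\phi^{nm}$, and combine the monotonicity \eqref{one-lip} with \eqref{poin2} to get $-L\|w^{nm}\|^2+\gamma\|K\phi^{nm}\|^2\le(\eta^{nm},\phi^{nm})$. Substituting $K\phi^{nm}=\eta^{nm}-w^{nm}$ and using the linear growth \eqref{lin-gro} together with Cauchy--Schwarz and Young's inequality leads to an estimate of the form
\begin{align*}
(\gamma-L)\,\E\|w^{nm}\|^2 \le C\bigl(\E\|\eta^{nm}\|^2+(\E\|\eta^{nm}\|^2)^{1/2}\bigr).
\end{align*}
The triangle inequality together with Proposition \ref{en-est} gives $\E\|\eta^{nm}\|^2\le 2\E\|E^n\|^2+2\E\|E^m\|^2\le C(h_n^{2H+1}+h_m^{2H+1})\to 0$, so $u^n\to u$ in $L^2(\Omega;L^2(\DD))$ for some $u$.

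To pass to the limit in the nonlinearity, note that \eqref{lin-gro} makes $\{f(u^n)\}$ bounded in $L^2(\Omega;L^2(\DD))$; extracting a weakly convergent subsequence $f(u^n)\rightharpoonup v$ and using that $K$ is Hilbert--Schmidt (hence compact) on $L^2(\DD)$ gives $Kf(u^n)\to Kv$ strongly, so the approximating equation passes to $u+Kv=Kg+K\dot W$. I would identify $v=f(u)$ by Minty's trick: from $(f(u^n)-f(y),u^n-y)\ge -L\|u^n-y\|^2$ for arbitrary $y\in L^2(\DD)$, the mixed weak/strong limit yields $(v-f(y),u-y)\ge -L\|u-y\|^2$, and specializing to $y=u+\epsilon z$ with $\epsilon\to 0^+$ (using hemicontinuity of $f(x,\cdot)$) gives $v=f(u)$.

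For uniqueness, if $u_1,u_2$ both satisfy \eqref{mild-1}, set $w=u_1-u_2$, $\phi=f(u_1)-f(u_2)$, so $w=-K\phi$. Then \eqref{poin2} forces $(K\phi,\phi)\ge\gamma\|K\phi\|^2=\gamma\|w\|^2$, while monotonicity gives $(K\phi,\phi)=-(w,\phi)\le L\|w\|^2$, whence $(\gamma-L)\|w\|^2\le 0$ and $u_1=u_2$. The main obstacle in this program is the passage to the limit in $f(u^n)$: because \eqref{lin-gro} is strictly weaker than Lipschitz continuity, strong $L^2$ convergence of $f(u^n)$ is not automatic, and the Minty monotonicity argument together with the smoothing property of $K$ is essential to close the scheme.
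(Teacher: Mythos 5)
Your proposal is correct in its skeleton and matches the paper's strategy at every structural point: the same approximating sequence $u^n$ driven by the piecewise constant noise $\dot W^n$, the same energy argument (pair the difference equation with $f(u^n)-f(u^m)$, use the monotonicity \eqref{one-lip} together with \eqref{poin2}) to get the Cauchy property, and the identical uniqueness argument. Where you genuinely diverge is the passage to the limit in the nonlinearity: the paper first restricts to \emph{bounded} $f$, for which the Cauchy estimate is immediate and $f(u^n)\to f(u)$ is asserted directly, and then recovers the linear-growth case by citing the localization argument of Buckdahn--Pardoux; you instead work with linear-growth $f$ from the start, absorb the extra $\beta\|\eta^{nm}\|\,\|w^{nm}\|$ term by Young's inequality (exactly as the paper itself does later in the proof of Theorem \ref{uun}), and identify the weak limit of $f(u^n)$ by Minty's trick. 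This is self-contained and arguably cleaner than outsourcing to a localization lemma; what it costs you is an explicit hemicontinuity hypothesis on $f(x,\cdot)$, which Assumption \ref{ap} does not formally contain --- though the paper's own appeal to ``classical deterministic analysis'' and its unexplained step $f(u^n)\to f(u)$ implicitly need the same continuity, so you are not assuming more than the authors do.

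One technical slip to repair: you invoke compactness of $K$ on $L^2(\DD)$ to upgrade $f(u^n)\rightharpoonup v$ to $Kf(u^n)\to Kv$ strongly, but the weak convergence you have is in $L^2(\Omega;L^2(\DD))$, and $K$ tensored with the identity on $L^2(\Omega)$ is \emph{not} compact there. The fix is free of charge: the approximate equation itself gives $Kf(u^n)=Kg+K\dot W^n-u^n$, and the right-hand side converges strongly in $L^2(\Omega\times\DD)$ by your Cauchy estimate and Proposition \ref{en-est}; boundedness of $K$ identifies this strong limit with $Kv$, and injectivity of $K$ (its eigenvalues $(k\pi)^{-2}$ are all positive) then lets Minty's argument conclude $v=f(u)$. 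With that adjustment the proof closes.
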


\begin{proof}
We first prove the uniqueness. 
Suppose that $u$ and $v$ solve Eq. \eqref{mild}.
Then
\begin{align*}
u-v+K(f(u)-f(v))=0.
\end{align*}
Multiplying by $f(u)-f(v)$  on the above equation, we have
\begin{align*}
(u-v,f(u)-f(v))+(K(f(u)-f(v)),f(u)-f(v))=0.
\end{align*}
From the monotone type condition \eqref{one-lip} in Assumption \ref{ap} and \eqref{poin2} we deduce that
\begin{align*}
(\gamma-L)\|u-v\|^2 \le 0,
\end{align*}
which implies that $u=v$.

Next we prove the existence. 
The proof is for bounded $f$. 
The general case of $f$ satisfying the linear growth condition \eqref{lin-gro} follows from localization arguments in \citep[Theorem 2.5]{BP90}.
For each $n\in \N_+$, we consider the SPDE obtained by replacing  $\dot W$ with $\dot W^n$ in Eq. \eqref{ell}:
\begin{align}\label{un}
\begin{split}
-\frac{d^2}{dx^2}u^n+f(u^n)
&=g+\dot{W}^n \quad\text{in}\quad \DD,\\
u^n
&=0 \quad\qquad\ \text{on}\quad\partial \DD. 
\end{split}
\end{align}
The existence of a unique solution $u^n\in H^1_0(\DD)$ for Eq. \eqref{un} follows from the classical deterministic analysis.
Clearly, $u^n-u^m+K(f(u^n)-f(u^m))=K(\dot{W}^n-\dot{W}^m)$.
Multiplying by $f(u^n)-f(u^m)$, we obtain
\begin{align*}
(u^n-u^m,f(u^n)-f(u^m))+(K(f(u^n)-f(u^m)),f(u^n)-f(u^m))
=(K(\dot{W}^n-\dot{W}^m),f(u^n)-f(u^m)).
\end{align*}
It follows from the monotone type condition \eqref{one-lip} and Poincar\'e inequality \eqref{poin2} that
\begin{align}
(\gamma-L)\|u^n-u^m\|^2 \le (K(\dot{W}^n-\dot{W}^m),f(u^n)-f(u^m)+2\gamma(u^n-u^m)).
\end{align}
Since $\E\left[\|K(\dot{W}^n-\dot{W}^m)\|^2\right]$ tends to $0$ as $n,m\rightarrow\infty$ and $f$ is bounded, $\{u^n\}$ is a Cauchy sequence in $L^2(\DD\times \Omega)$. Hence there exists $u$ in $L^2(\DD\times \Omega)$ such that $u=\lim_{n\rightarrow\infty}u^n$. 
From the boundedness of $f$ and Assumption \ref{ap}, $f(u^n)\rightarrow f(u)$ in $L^2(\DD\times \Omega)$ as $n\rightarrow\infty$. The existence then follows from taking the limit in \eqref{un}.
\end{proof}

\section{Error estimates of piecewise constant approximations}
\label{sec3}

In this section, we estimate the error between the solution of Eq. \eqref{ell} and the solution of the approximate equation
\begin{align}\label{un0}
\begin{split}
-\frac{d^2}{dx^2}u^n+f(u^n)
&=g+\dot{W}^n \quad\text{in}\quad \DD,\\
u^n
&=0 \quad\qquad\ \text{on}\quad\partial \DD. 
\end{split}
\end{align}

Set $F^n=g+\dot{W}^n$. 
The variational formulation of Eq. \eqref{un0} is to find a $u^n\in H^1_0(\DD)$ such that a.s.
\begin{align}
\left(\frac{d}{dx}u_n,\frac{d}{dx}v\right)+(f(u_n),v)
=(F^n,v),\quad \forall\ v\in H^1_0(\DD). \label{var}
\end{align}

We first analyze the regularity and obtain a bound for $u^n$, which will play a key role in the error estimate of the finite element approximation for Eq. \eqref{un0} in Section \ref{sec4}.

\begin{tm}\label{uheat}
Let Assumption \ref{ap} hold. 
Eq. \eqref{var}, therefore Eq. \eqref{un0}, has a unique solution $u^n\in H^1_0(\DD)\cap H^2(\DD)$, a.s. Moreover, there exists a constant $C$ such that
\begin{align}
\E\left[\|u^n\|^2_2\right]
\le C h^{2H-2}.\label{euh}
\end{align}
\end{tm}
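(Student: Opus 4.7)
The plan is to treat Eq. \eqref{un0} pathwise for almost every $\omega$. Since $\dot W^n$ is piecewise constant in $x$, the random right-hand side $F^n=g+\dot W^n$ lies in $L^2(\DD)$ a.s., so the problem reduces to a deterministic semilinear elliptic boundary value problem with $L^2$ data. The stochastic ingredient only enters at the end, through the identity \eqref{wh-est} which controls $\E[\|\dot W^n\|^2]$.

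First, I would establish existence and uniqueness of $u^n\in H^1_0(\DD)$ for the variational problem \eqref{var} for a.e.\ $\omega$. The map $v\mapsto -v''+f(v)$, considered from $H^1_0(\DD)$ into $H^{-1}(\DD)$, is monotone by \eqref{one-lip} and coercive because of the strict inequality $L<\gamma=2$ together with Poincaré \eqref{poin}; the linear growth \eqref{lin-gro} gives the required continuity. Standard monotone operator theory (or the approximation argument already used in the existence proof for Eq.~\eqref{ell}) then produces a unique solution $u^n\in H^1_0(\DD)$ almost surely.

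Next I would upgrade the regularity to $H^2(\DD)$ and control $\|u^n\|_2$ pathwise. Rewriting the equation as
\begin{align*}
-\frac{d^2}{dx^2}u^n = F^n-f(u^n),
\end{align*}
the linear growth \eqref{lin-gro} combined with $f(\cdot,0)=0$ yields $\|f(u^n)\|\le C(1+\|u^n\|)$, so the right-hand side lies in $L^2(\DD)$ a.s. One-dimensional elliptic regularity for the Dirichlet Laplacian then gives $u^n\in H^2(\DD)$ together with
\begin{align*}
\|u^n\|_2 \le C\bigl(\|F^n\|+\|f(u^n)\|\bigr) \le C\bigl(\|g\|+\|\dot W^n\|+1+\|u^n\|\bigr).
\end{align*}
To close this, I would test \eqref{var} with $v=u^n$, use $(f(u^n),u^n)\ge -L\|u^n\|^2$ from \eqref{one-lip}, apply Poincaré \eqref{poin}, and deduce
\begin{align*}
(\gamma-L)\|u^n\|^2 \le \|F^n\|\,\|u^n\|,
\end{align*}
hence $\|u^n\|\le \|F^n\|/(\gamma-L)$. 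Substituting back yields $\|u^n\|_2\le C(1+\|g\|+\|\dot W^n\|)$ almost surely.

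Finally, I would square this bound, take expectation, and invoke \eqref{wh-est}:
\begin{align*}
\E\bigl[\|u^n\|_2^2\bigr]
\le C\bigl(1+\|g\|^2+\E[\|\dot W^n\|^2]\bigr)
\le C(1+h^{2H-2})
\le C h^{2H-2},
\end{align*}
the last inequality being valid for all sufficiently small $h$ because $2H-2<0$. There is no real obstacle here: the only subtle point is remembering that $\dot W^n$ is pathwise an $L^2$ function (even though its $L^2$-norm blows up in expectation as $h\to 0$), which is what makes it legitimate to run the entire existence/regularity argument $\omega$-by-$\omega$ with purely deterministic elliptic theory and only use the stochastic information in the last line.
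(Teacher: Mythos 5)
Your proposal is correct and follows essentially the same route as the paper: test the variational form with $u^n$ to get $\|u^n\|\le\|F^n\|/(\gamma-L)$ via the monotone condition and Poincar\'e, bound the right-hand side $R^n=F^n-f(u^n)$ using the linear growth condition, invoke one-dimensional elliptic regularity for $\|u^n\|_2$, and finish with the identity \eqref{wh-est}. The only cosmetic difference is that you spell out the monotone-operator existence argument and the pathwise-versus-expectation bookkeeping, which the paper leaves to ``classical deterministic arguments.''
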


\begin{proof}
The existence of a unique solution $u^n\in H^1_0(\DD)$ a.s. follows from the classical deterministic arguments. 
To obtain \eqref{euh}, we first notice that Assumption \ref{ap}, the Poincar\'{e}'s inequality \eqref{poin} and Cauchy-Schwarz inequality yield that
\begin{align*}
\|F_n\|\cdot \|u_n\|
\ge (F_n, u_n)
=\Big\|\frac{d}{dx}u_n\Big\|^2+(f(u^n), u^n)
\ge (\gamma-L)\|u^n\|^2,
\end{align*}
from which we obtain
\begin{align*}
\|u^n\|\le\frac{1}{\gamma-L}\|F^n\|.
\end{align*}
Set $R^n=F^n-f(u^n)$. 
The linear growth condition \eqref{lin-gro} gives
\begin{align*}
\|R^n\|^2\le 4\beta^2
+\left(2+\frac{4\beta^2}{(\gamma-L)^2}\right)\|F^n\|^2.
\end{align*}
On the other hand, it follows from Eq. \eqref{un0} that $u^n\in H^2(\DD)$ and 
\begin{align*}
\|u^n\|_2^2\le C\|R^n\|^2
\end{align*} 
for some $C\in (0,\infty)$.
We conclude \eqref{euh} by combining the above estimates and \eqref{wh-est}.
\end{proof} \\

Next we estimate the error between the exact solution $u$ of Eq. \eqref{ell} and its approximation $u^n$ defined by Eq. \eqref{un0}. 
Recall that it follows from Definition \ref{df-mild} that $u$ and $u^n$ are the unique solutions of the following Hammerstein integral equations, respectively:
\begin{align}
u+Kf(u)=&Kg+K\dot{W}, \label{mildu}\\
u^n+Kf(u^n)=&Kg+K\dot{W^n}.\label{milduh}
\end{align}

\begin{tm}\label{uun}
Let Assumption \ref{ap} hold.
There exists a constant $C$ such that
\begin{align}\label{uun0}
\sqrt{\E\left[\|u-u^n\|^2\right]}\le C h^{\frac H2+\frac14}. 
\end{align}
Assume furthermore that $f$ is Lipschitz continuous with the Lipschitz constant $L<\gamma$, then
\begin{align}\label{uun1}
\sqrt{\E\left[\|u-u^n\|^2\right]} \le C h^{H+\frac 12}. 
\end{align}
\end{tm}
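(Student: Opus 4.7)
The plan is to subtract the mild equations \eqref{mildu} and \eqref{milduh}, obtaining the pathwise identity
\begin{align*}
e + K\bigl(f(u) - f(u^n)\bigr) = E^n,
\end{align*}
where $e := u - u^n$ and $E^n$ is the stochastic-convolution error from \eqref{en}, for which Proposition \ref{en-est} supplies $\E[\|E^n\|^2] \le Ch^{2H+1}$. The strategy is then to derive an almost pathwise bound on $\|e\|$ in terms of $\|E^n\|$ and take expectations.

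For the Lipschitz case \eqref{uun1}, I would test the identity against $e$, exploit the self-adjointness of $K$ to rewrite $(K(f(u)-f(u^n)), e) = (f(u)-f(u^n), Ke)$, and invoke the operator bound $\|K\phi\| \le \gamma^{-1}\|\phi\|$ (which follows from \eqref{poin2} and Cauchy--Schwarz) to get $\|e\|^2 \le \|E^n\|\|e\| + (L/\gamma)\|e\|^2$, hence $\|e\| \le \gamma(\gamma - L)^{-1}\|E^n\|$ pathwise. Passing to second moments and applying \eqref{en-est0} yields \eqref{uun1} immediately.

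For the general case \eqref{uun0} the Lipschitz control of $f(u) - f(u^n)$ is unavailable, so I would test against $f(u) - f(u^n)$ instead; the monotone condition \eqref{one-lip} combined with \eqref{poin2} then gives $\gamma\|K(f(u)-f(u^n))\|^2 - L\|e\|^2 \le (E^n, f(u) - f(u^n))$. Substituting $K(f(u)-f(u^n)) = E^n - e$, dropping the resulting nonpositive contribution $-\gamma\|E^n\|^2$, and using the linear growth \eqref{lin-gro} to bound $\|f(u) - f(u^n)\| \le C(1 + \|e\|)$ leads to the pathwise inequality $(\gamma - L)\|e\|^2 \le C\|E^n\|(1 + \|e\|)$. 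Taking expectations, Cauchy--Schwarz in $\omega$, and a Young inequality to absorb one factor of $\|e\|$ produces $\E[\|e\|^2] \le C\sqrt{\E[\|E^n\|^2]} + C\E[\|E^n\|^2]$, of which the first term dominates for small $h$ and yields \eqref{uun0}. The main obstacle is exactly this final step: without a Lipschitz bound the right-hand side of the energy inequality is linear rather than quadratic in $\|E^n\|$, and this single square-root loss is what halves the attainable convergence rate from $h^{H+1/2}$ to $h^{H/2+1/4}$.
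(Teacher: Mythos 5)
Your proposal is correct and, for the general estimate \eqref{uun0}, follows essentially the same route as the paper: subtract the two Hammerstein equations, test against $f(u)-f(u^n)$, combine the monotone condition \eqref{one-lip} with \eqref{poin2}, substitute $K(f(u)-f(u^n))=E^n-(u-u^n)$, and invoke the linear growth bound \eqref{lin-gro}, so that the surviving first-power term $\|E^n\|$ forces the halved rate after Cauchy--Schwarz in $\omega$. The only divergence is in the Lipschitz case: the paper reuses the same energy identity and merely observes that the additive constant in \eqref{lin-gro} drops out, whereas you test directly against $u-u^n$ and use the operator bound $\|K\phi\|\le\gamma^{-1}\|\phi\|$ (itself a consequence of \eqref{poin2} and Cauchy--Schwarz) to obtain the pathwise bound $\|u-u^n\|\le\gamma(\gamma-L)^{-1}\|E^n\|$ --- a slightly cleaner variant that bypasses the monotonicity condition in that sub-case and yields the same rate $h^{H+1/2}$.
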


\begin{proof}Subtracting \eqref{milduh} from \eqref{mildu}, we obtain
\begin{align}
u(x)-u^n(x)+K(f(u)-f(u^n))=E^n. \label{u-uh}
\end{align}
In terms of the estimate \eqref{en-est0} of $E^n$ defined by \eqref{en} in Lemma \ref{en-est}, to prove \eqref{uun0}, it suffices to prove
\begin{align}\label{ueh}
\|u-u^n\|^2\le C\|E^n\|^2+\|E^n\|. 
\end{align}
To this end, we multiply  \eqref{u-uh} by $f(u)-f(u^n)$ to obtain
\begin{align*}
(u-u^n,f(u)-f(u^n))+(K(f(u)-f(u^n)),f(u)-f(u^n))=(E^n,f(u)-f(u^n)).
\end{align*}
The estimate \eqref{poin2} and the monotone type condition \eqref{one-lip} yield
 \begin{align}\label{uef}
-L\|u-u^n\|^2+\gamma\|K\big(f(u)-f(u^n)\big)\|^2\le \|E^n\|\cdot\|f(u)-f(u^n)\|.
\end{align}
Using the Young type inequality
$\|\phi+\psi\|^2\ge \epsilon\|\phi\|^2-\frac{2-\epsilon}{1-\epsilon}\|\psi\|^2$ with $\phi=u-u^n,\psi=-E^n$ and $\epsilon=\frac{L+\gamma}{2\gamma}$, we obtain
 \begin{align}
\|K\big(f(u)-f(u^n)\big)\|^2=\|u-u^n-E^n\|^2\ge \frac{L+\gamma}{2\gamma}\|u-u^n\|^2-\frac{3\gamma-L}{\gamma-L}\|E^n\|^2. \label{kf-kfn}
\end{align}
By the average inequality $a b\le \frac{L-\gamma}{4\beta} a^2+\frac{\beta}{L-\gamma}b^2$ and \eqref{lin-gro}, we obtain
\begin{align}
\|E^n\|\cdot \|f(u)-f(u^n)\|
\le \beta \|E^n\|(1+\|u-u^n)\|
\le \beta\|E^n\|+\frac{L-\gamma}{4}\|u-u^n\|^2+\frac{\beta^2}{L-\gamma}\|E^n\|^2. \label{enf}
\end{align}
Substituting \eqref{enf} and \eqref{kf-kfn} into \eqref{uef}, we deduce that
\begin{align*}
-L\|u-u^n\|^2+\frac{L+\gamma}{2}\|u-u^n\|^2-\frac{2(3\gamma-L)}{\gamma-L}\|E^n\|^2
\le \beta\|E^n\|+\frac{L-\gamma}{4}\|u-u^n\|^2+\frac{\beta^2}{L-\gamma}\|E^n\|^2,
\end{align*}
from which the desired estimate \eqref{ueh} follows.

Now assume that $f$ is Lipschitz continuous with the Lipschitz constant $L<\gamma$, then the term $\|E^n\|$ in \eqref{ueh} would disappear. 
In this case we achieve \eqref{uun1}.
\end{proof}

\section{Finite Element Approximations}
\label{sec4}

In this section, we consider the finite element approximations of Eq. \eqref{var} and establish an overall error estimate between the exact solution and its finite element approximations.

Let $V_h$ be the continuous piecewise linear finite element subspace of $H^1_0(\DD)$ with respect to the  quasi-uniform partition  $\{\DD_i\}_{i=0}^{n-1}$ given in  Section \ref{sec2}.
Then the finite element approximation of Eq. \eqref{var} is to find an $u^n_h\in V_h$ for each $n\in \N$ such that
\begin{align}\label{fem}
\left(\frac{d}{dx} u^n_h,\frac{d}{dx} v_h\right)+(f(u^n_h),v_h)
=(F^n,v_h),\quad \forall\ v_h\in V_h. 
\end{align}

\begin{tm}
Let Assumption \ref{ap} hold.
Eq. \eqref{fem} has a unique solution $u^n_h\in H^1_0(\DD)$, a.s. 
Moreover, there exists a constant $C$ such that
\begin{align}\label{uh}
\E\left[\|u^n_h\|_1^2\right]
\le C h^{2H-2}. 
\end{align}
\end{tm}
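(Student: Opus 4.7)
The plan is to mirror the proof of Theorem \ref{uheat}, since \eqref{fem} is just the Galerkin restriction of \eqref{var} to the finite-dimensional subspace $V_h \subset H^1_0(\DD)$, and the monotone-plus-linear-growth structure of $f$ transfers verbatim.

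For existence and uniqueness, because $\dim V_h < \infty$, I would define the pathwise continuous nonlinear map $T_\omega : V_h \to V_h$ via the Riesz representation in $V_h$ by $(T_\omega w, v_h) := (w', v_h') + (f(w), v_h) - (F^n, v_h)$. Testing $T_\omega w$ against $w$, applying the Poincar\'e inequality \eqref{poin} and the monotone condition \eqref{one-lip} (with $s=0$, using $f(x,0)=0$), and absorbing the linear term $(F^n, w)$ by Cauchy--Schwarz, yields coercivity $(T_\omega w, w) \ge (1 - L/\gamma)\|w'\|^2 - C\|F^n\|\,\|w'\|$, which is positive outside a sufficiently large ball in $V_h$. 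The standard surjectivity result for continuous coercive maps on a finite-dimensional Hilbert space (equivalently, Brouwer on such a ball) then produces a zero $u^n_h$ of $T_\omega$. Uniqueness: subtracting two purported solutions, testing the difference against itself, and combining \eqref{one-lip} with \eqref{poin} gives $(1-L/\gamma)\|(u^n_h - \tilde u^n_h)'\|^2 \le 0$, hence $u^n_h = \tilde u^n_h$ since $V_h \subset H^1_0(\DD)$.

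For the a priori bound, I would test \eqref{fem} with $v_h = u^n_h$ to obtain
\begin{align*}
\Big\|\frac{d}{dx}u^n_h\Big\|^2 + (f(u^n_h), u^n_h) = (F^n, u^n_h).
\end{align*}
The monotone condition \eqref{one-lip} with $s=0$ gives $(f(u^n_h), u^n_h) \ge -L \|u^n_h\|^2$. Combining this with Cauchy--Schwarz on the right and the Poincar\'e inequality \eqref{poin} to absorb both the lower-order term and the right-hand side into $\|(u^n_h)'\|$, I obtain a pathwise estimate of the form $\|(u^n_h)'\| \le C(\gamma, L)\,\|F^n\|$. Writing $F^n = g + \dot W^n$, taking expectations, and inserting the identity \eqref{wh-est}, namely $\E[\|\dot W^n\|^2] = h^{2H-2}$, then yields \eqref{uh}, since $h^{2H-2}$ dominates the fixed contribution from $\|g\|^2$ as $h \to 0$; the passage from $\|(u^n_h)'\|$ to the full $H^1$ norm is via $\|u^n_h\|_1^2 \le (1 + \gamma^{-1}) \|(u^n_h)'\|^2$.

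The main obstacle is minor: the argument is essentially a direct transcription of Theorem \ref{uheat} to the Galerkin subspace. The only delicate point is that $f$ is monotone only up to the slack $L$, so the strict inequality $L < \gamma$ is essential for both coercivity (hence existence and uniqueness) and the $\|F^n\|$-controlled a priori estimate; everything else is routine and in particular no special feature of the fBm beyond \eqref{wh-est} is needed at this stage.
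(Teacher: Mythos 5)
Your proof is correct and follows essentially the same route as the paper: an energy argument testing \eqref{fem} with $v_h=u^n_h$, coercivity from $L<\gamma$ via \eqref{one-lip} and \eqref{poin}, and the identity \eqref{wh-est}. The only cosmetic difference is that you absorb $(f(u^n_h),u^n_h)$ directly by monotonicity, whereas the paper first bounds $\|u^n_h\|$ by $\|F^n\|/(\gamma-L)$, then uses the linear growth condition \eqref{lin-gro} to control $R^n_h=F^n-f(u^n_h)$ and reads off the $H^1$ bound from the linear discrete problem with right-hand side $R^n_h$; both yield $\|u^n_h\|_1\le C\|F^n\|$ pathwise.
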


\begin{proof}
Following a similar argument as in the proof of Theorem \ref{uheat}, we have
\begin{align}\label{ufn}
\|u^n_h\|\le\frac{\|F^n\|}{\gamma-L}.
\end{align}
Define $R^n_h=F^n-f(u^n_h)$. The linear growth condition \eqref{lin-gro} together with \eqref{ufn} implies
\begin{align}\label{r-h}
\|R^n_h\|^2\le 4\beta^2
+\left( 2+\frac{4 \beta^2}{(\gamma-L)^2}\right) \|F^n\|^2.
\end{align}
Notice that $u^n_h$ is the solution of
\begin{align*}
\left(\frac{d}{dx} u^n_h,\frac{d}{dx} v_h\right)=(R^n_h,v_h),
\quad \forall\ v_h\in V_h, 
\end{align*}
from which we derive
\begin{align}
\|u^n_h\|_1^2\le C \|R^n_h\|^2. \label{uh-rh}
\end{align}
We conclude the estimate \eqref{uh} with \eqref{ufn}--\eqref{uh-rh} and \eqref{wh-est}.
\end{proof} \\

Next we derive an estimate between $u^n$ and $u^n_h$. For this purpose we introduce the Galerkin (or Ritz) projection operator $\RR_h: H^1_0(\DD)\rightarrow V_h$ defined by
\begin{align}\label{gal-pro}
\left(\frac{d}{dx} \RR_h w,\frac{d}{dx} v_h\right)
=\left(\frac{d}{dx} w,\frac{d}{dx} v_h\right),\quad  \forall\ v_h\in V_h,\ w\in H^1_0(\DD). 
\end{align}
It is well-known that there exists a constant $C$ such that \citep[see, e.g.,][Lemma 1.1]{Tho06}
\begin{align}\label{standard}
\|w-\RR_h w\|+h\Big\|\frac{d}{dx}(w-\RR_h w)\Big\|
\le C h^2\|w\|_2,\quad \forall\ w\in H^1_0(\DD)\cap H^2(\DD).
\end{align}

\begin{tm}\label{ununh}
Let Assumption \ref{ap} hold.
There exists a constant $C$ such that
\begin{align}\label{ununh0}
\sqrt{\E\left[\|u^n-u^n_h\|^2\right]}
\le C h^\frac{H+1}2. 
\end{align}
Assume furthermore that $f$ is Lipschitz continuous with the Lipschitz constant $L<\gamma$, then
\begin{align}\label{ununh1}
\sqrt{\E\left[\|u^n-u^n_h\|^2\right]}
\le C h^{H+1}. 
\end{align}
\end{tm}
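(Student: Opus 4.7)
The plan is to reduce the analysis to an argument of exactly the same shape as the proof of Theorem~\ref{uun}, with the stochastic remainder $E^n$ replaced by the finite element consistency error. To set this up, I introduce the discrete solution operator $K_h:L^2(\DD)\to V_h$ defined by $(\frac{d}{dx}K_h\phi,\frac{d}{dx}v_h)=(\phi,v_h)$ for every $v_h\in V_h$, so that $u^n_h=K_h R^n_h$ while $u^n=K(F^n-f(u^n))$. Adding and subtracting $Kf(u^n_h)$ in $u^n-u^n_h=K(F^n-f(u^n))-K_h(F^n-f(u^n_h))$ then yields the discrete Hammerstein identity
\begin{align*}
u^n-u^n_h+K\bigl(f(u^n)-f(u^n_h)\bigr)=(K-K_h)R^n_h=:\mathcal{E}^n_h,
\end{align*}
which is the exact analog of \eqref{u-uh} with $\mathcal{E}^n_h$ in the role of $E^n$.

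Next, I take the $L^2$ inner product of this identity with $f(u^n)-f(u^n_h)$ and apply the monotone condition \eqref{one-lip}, the inequality \eqref{poin2} combined with the representation $K(f(u^n)-f(u^n_h))=\mathcal{E}^n_h-(u^n-u^n_h)$, and the linear growth estimate \eqref{lin-gro}. Following step by step the Young-inequality manipulations of the proof of Theorem~\ref{uun} produces the pathwise bound
\begin{align*}
\|u^n-u^n_h\|^2\le C\|\mathcal{E}^n_h\|^2+\|\mathcal{E}^n_h\|.
\end{align*}
The lower-order term $\|\mathcal{E}^n_h\|$ is generated by the additive constant in \eqref{lin-gro}; under the stronger global Lipschitz hypothesis this term is absent and one obtains $\|u^n-u^n_h\|^2\le C\|\mathcal{E}^n_h\|^2$ instead.

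To estimate $\mathcal{E}^n_h=(K-K_h)R^n_h$, observe that $K_h R^n_h$ is by construction the Galerkin finite element approximation of $KR^n_h\in H^1_0(\DD)\cap H^2(\DD)$ and that elliptic regularity for the one-dimensional Poisson problem gives $\|KR^n_h\|_2\le C\|R^n_h\|$. Hence the standard approximation estimate \eqref{standard} yields $\|\mathcal{E}^n_h\|\le Ch^2\|R^n_h\|$ pathwise. Combining \eqref{r-h} with \eqref{wh-est} gives $\E[\|R^n_h\|^2]\le Ch^{2H-2}$, so that $\E[\|\mathcal{E}^n_h\|^2]\le Ch^{2H+2}$ and, by Jensen's inequality, $\E[\|\mathcal{E}^n_h\|]\le Ch^{H+1}$. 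Substituting these bounds into the two variants above yields the general estimate \eqref{ununh0}, in which the slower term $\E[\|\mathcal{E}^n_h\|]$ dominates for small $h$, and the sharper estimate \eqref{ununh1} under the Lipschitz hypothesis.

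The hard part is the first step: because of the nonlinearity, C\'ea's lemma cannot be applied directly, and one must produce the decomposition above in which the deterministic commutator $(K-K_h)$, evaluated at the random source $R^n_h$, is cleanly separated from the nonlinear remainder $K(f(u^n)-f(u^n_h))$. Once this identity is in place, the remaining work is essentially a recycling of the Hammerstein argument of Theorem~\ref{uun}, together with the pathwise bound on $\|R^n_h\|$ provided by Theorem~\ref{uheat} and the classical Galerkin estimate \eqref{standard}.
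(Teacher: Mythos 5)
Your proposal is correct, and the rates come out right: the identity $u^n-u^n_h+K(f(u^n)-f(u^n_h))=(K-K_h)R^n_h$ is easily verified by adding and subtracting $KR^n_h$, the operator $K_hR^n_h$ is indeed the Ritz projection $\RR_h(KR^n_h)$ so that \eqref{standard} together with the elliptic regularity $\|KR^n_h\|_2\le C\|R^n_h\|$ gives $\|\mathcal{E}^n_h\|\le Ch^2\|R^n_h\|$, and the coefficient bookkeeping in the Young-inequality step leaves the positive factor $\frac{\gamma-L}{4}$ in front of $\|u^n-u^n_h\|^2$. However, your route is genuinely different from the paper's. The paper works directly in the variational formulation: it uses Galerkin orthogonality to get $\|\frac{d}{dx}(\RR_hu^n-u^n_h)\|^2+(f(u^n)-f(u^n_h),\RR_hu^n-u^n_h)=0$, absorbs the nonlinearity via Assumption \ref{ap} and the Poincar\'e inequality, and reduces everything to the Ritz projection error $\|u^n-\RR_hu^n\|\le Ch^2\|u^n\|_2$, which is then controlled by the $H^2$ regularity estimate \eqref{euh} of Theorem \ref{uheat}. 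You instead recast the discretization error as a perturbation of the Hammerstein integral equation, with the commutator $(K-K_h)R^n_h$ playing exactly the role of $E^n$ in Theorem \ref{uun}, so that a single perturbation lemma ($\|u^n-u^n_h\|^2\le C(\|\mathcal{E}^n_h\|^2+\|\mathcal{E}^n_h\|)$, with the first-order term dropping out in the Lipschitz case) covers both the noise-truncation error and the finite element error. What your approach buys is this unification and the fact that you only need the $L^2$ bound on the source $R^n_h$ rather than the $H^2$ bound on $u^n$ (though these are equivalent here); what the paper's approach buys is the standard FEM structure, which in particular yields the superconvergence observation $\E[\|\frac{d}{dx}(\RR_hu^n-u^n_h)\|^2]\le Ch^{H+1}$ recorded in the remark following the theorem, a byproduct your argument does not directly produce.
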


\begin{proof}It follows from \eqref{var}, \eqref{fem} and \eqref{gal-pro} that
\begin{align}
\Big\|\frac{d}{dx}(\RR_hu^n-u^n_h)\Big\|^2
+(f(u^n)-f(u^n_h),\RR_hu^n-u^n_h)=0.
\end{align}
The Assumptions \ref{ap} and the average inequality $a\cdot b\le \frac{\gamma-L}{2\beta} a^2+\frac{\beta}{2(\gamma-L)}b^2$ with $a=\|u^n-u^n_h\|$ and $b=\|u^n-\RR_hu^n\|$ yield
\begin{align}\label{unpun}
\Big\|\frac{d}{dx}(\RR_hu^n-u^n_h)\Big\|^2  
\le \frac{\gamma+L}2\|u^n-u^n_h\|^2+\beta\|u^n-\RR_hu^n\|
+\frac{\beta^2}{2(\gamma-L)}\|u^n-\RR_hu^n\|^2. 
\end{align}
Applying the projection theorem, Poincar\'{e} inequality \eqref{poin} and the above inequality, we obtain
\begin{align}
\gamma\|u^n-u^n_h\|^2 
\le \frac{\gamma+L}2\|u^n-u^n_h\|^2+\beta\|u^n-\RR_hu^n\|
+\frac{\gamma+\beta^2}{2(\gamma-L)}\|u^n-\RR_hu^n\|^2
\end{align}
from which and \eqref{standard} we derive
\begin{align}
\|u^n-u^n_h\|^2
\le C(\|u^n-\RR_hu^n\|+\|u^n-\RR_hu^n\|^2)
\le C h^2\|u^n\|_2. \label{u-u}
\end{align}
The desired error estimate then follows  from \eqref{u-u} and Theorem \ref{uheat}.

Now assume that $f$ is Lipschitz continuous with the Lipschitz constant $L<\gamma$, then the term $\|u^n-\RR_hu^n\|$ in \eqref{unpun} would disappear. As a consequence,
\begin{align}
\|u^n-u^n_h\|^2
\le C \|u^n-\RR_hu^n\|^2
\le C h^4\|u^n\|_2^2
\le C h^{2H+2}.
\end{align}
This leads to the estimate \eqref{ununh1}.
\end{proof} 

\begin{rk}
We should not expect any estimate of $\E\left[\|\frac{d}{dx}(u^n-u^n_h)\|^2\right]$ with a positive order since $\E\left[\|u^n\|_2^2\right]=\mathcal O(h^{2H-2})$.
However, by the proof of Theorem \ref{ununh},
\begin{align*}
\E\left[\Big\|\frac{d}{dx}(\RR_hu^n-u^n_h)\Big\|^2\right]
\le C h^{H+1},
\end{align*}
which agrees with the property of super-convergence of finite element method.
\end{rk}

Combining Theorem \ref{uun} and Theorem \ref{ununh}, we derive the main result about the error estimate between the exact solution $u$ and finite element solution $u^n_h$ by the triangle inequality.

\begin{tm}
Under Assumption \ref{ap}, the error between the exact solution $u$ of Eq. \eqref{ell} and its finite element solution $u^n_h$ defined by \eqref{fem} satisfies
\begin{align}\label{uunh1}
\sqrt{\E\left[\|u-u^n_h\|^2\right]}
\le C h^{\frac H2+\frac14}.
\end{align}
Assume furthermore that $f$ is Lipschitz continuous with the Lipschitz constant $L<\gamma$, then
\begin{align}\label{uunh2}
\sqrt{\E\left[\|u-u^n_h\|^2\right]}
\le C h^{H+\frac12}.
\end{align}
\end{tm}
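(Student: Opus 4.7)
The proof is essentially a triangle inequality argument that glues together the two error estimates already established, so the plan is short. I would write
\begin{align*}
u - u^n_h = (u - u^n) + (u^n - u^n_h),
\end{align*}
and take norms in $L^2(\Omega; L^2(\DD))$. Since the Minkowski inequality gives
\begin{align*}
\sqrt{\E\!\left[\|u - u^n_h\|^2\right]}
\le \sqrt{\E\!\left[\|u - u^n\|^2\right]} + \sqrt{\E\!\left[\|u^n - u^n_h\|^2\right]},
\end{align*}
the task reduces to invoking the two bounds already proved.

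For the general case, Theorem \ref{uun} gives $\sqrt{\E[\|u-u^n\|^2]} \le C h^{H/2+1/4}$ and Theorem \ref{ununh} gives $\sqrt{\E[\|u^n - u^n_h\|^2]} \le C h^{(H+1)/2}$. The second exponent is larger (for $H \in (0, 1/2]$ one checks $\tfrac{H}{2}+\tfrac14 \le \tfrac{H+1}{2}$ since this reduces to $\tfrac14 \le \tfrac12$), so the slower rate $h^{H/2+1/4}$ dominates and yields \eqref{uunh1}. For the Lipschitz case with $L < \gamma$, Theorem \ref{uun} gives $\sqrt{\E[\|u-u^n\|^2]} \le C h^{H+1/2}$ and Theorem \ref{ununh} gives $\sqrt{\E[\|u^n - u^n_h\|^2]} \le C h^{H+1}$; again the first term is dominant and produces \eqref{uunh2}.

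There is really no obstacle here: the two delicate estimates (the piecewise constant noise approximation error driven by the It\^o isometry \eqref{ito} and Lemma \ref{dif-gri}, and the deterministic finite element estimate based on the Galerkin projection \eqref{gal-pro} and the regularity bound \eqref{euh}) have done all the work. The only care needed is to verify the comparison of exponents so that one can absorb the smaller-order term into the constant $C$. I would therefore present the proof as a one-line application of the triangle inequality followed by a citation of Theorems \ref{uun} and \ref{ununh}, concluded by the elementary comparison of the two exponents in each regime.
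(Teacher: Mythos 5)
Your proposal is correct and is exactly the paper's own argument: the theorem is proved by the triangle (Minkowski) inequality in $L^2(\Omega;L^2(\DD))$ combined with the bounds of Theorems \ref{uun} and \ref{ununh}, with the slower rate dominating in each regime. Your explicit verification that $\tfrac{H}{2}+\tfrac14\le\tfrac{H+1}{2}$ and $H+\tfrac12\le H+1$ is a small but welcome addition that the paper leaves implicit.
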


\begin{proof}
The estimates \eqref{uunh1} and \eqref{uunh2} follows from \eqref{uun0}, \eqref{uun1} in Theorem \ref{uun} and 
\eqref{ununh0}, \eqref{ununh1} in Theorem \ref{ununh}. 
\end{proof}

\section{Conclusions}
\label{sec5}

In this paper we developed the Galerkin finite element method for the boundary value problem of a one dimensional second order SDE driven by an fBm. 
The Hurst index $H$ of the fBm is assumed to be equal to or less than $1/2$. 
We proved that, with continuous piecewise linear finite elements, the mean square convergence rate of the finite element approximations in the case of Lipschitz coefficient is $\mathcal O(h^{H+1/2})$, which is consistent with the existing result for white noise \citep[see, e.g.][]{ANZ98, GM06}.
In a separate work \citep[see][]{CHL16}, we have obtained strong convergence rate of finite element approximations for one dimensional time dependent SPDEs, including nonlinear stochastic heat equation and stochastic wave equation, driven by a fractional Brownian sheet which is temporally white and spatially fractional with $H\le 1/2$.
In future work, we plan to study the optimal convergence order of finite element approximations for SPDEs \eqref{ell} in high dimensional domains driven by a fractional Brownian sheet with $H\le 1/2$.


\bibliographystyle{IMANUM-BIB}
\bibliography{IMANUM-refs}

\end{document}